\theoremstyle:=definition,remark,plain\do{%
        \expandafter\g@addto@macro\csname th@\theoremstyle\endcsname{%
            \addtolength\thm@preskip\parskip
            }%
        }
\newtheorem{theorem}{Theorem}
\newtheorem*{theorem*}{Theorem}
\newtheorem{lemma}[theorem]{Lemma}
\newtheorem{corollary}[theorem]{Corollary}
\theoremstyle{definition}
\newtheorem*{remark*}{Remark}
\newtheorem*{problem*}{Problem}
\newcommand{\symg}[1]{   S \hspace{-.4ex}_{#1} } 
\newcommand{\simplex}[2]{ \Delta^{#1}( #2 )} 
\newcommand{\vertex}[2]{V^{(#1)}{(#2)}} 
\newcommand{\face}[2]{\sigma^{(#1)}{(#2)}} 
\newcommand{\bary}[1]{\mathrm{sd}(#1)}
\begin{document} 

\title{A topological product Tverberg Theorem}  

\author[A.~F.~Holmsen]{Andreas F. Holmsen$^\dagger$}\thanks{$^\dagger$Department of Mathematical Sciences, KAIST, Daejeon, South Korea \and Discrete Mathematics Group, Institute for Basic Sciences (IBS), Daejeon, South Korea. \texttt{andreash@kaist.edu}. Supported by the Institute for Basic Science (IBS-R029-C1).}

\author[G.~McCourt]{Grace McCourt$^\ast$}\thanks{$^\ast$Department of Mathematics, Iowa State University, USA. \texttt{gmccourt@iastate.edu}.}

\author[D.~McGinnis]{Daniel McGinnis$^\ddagger$}\thanks{$^\ddagger$Department of Mathematics, Princeton University, USA.  \texttt{dm7932@princeton.edu}. Supported by NSF award no. 2402145.}

\author[S.~Zerbib]{Shira Zerbib$^\diamond$} \thanks{$^\diamond$Department of Mathematics, Iowa State University, USA. \texttt{zerbib@iastate.edu}. Supported by NSF CAREER award no. 2336239  and Simons Foundation award no. MP-TSM-00002629.}

\date{\today}


\begin{abstract} 
We prove a  generalization of the topological Tverberg theorem. One special instance of our general theorem is the following:
Let $\Delta$ denote the 8-dimensional simplex viewed as an abstract simplicial complex, and suppose that its vertices are arranged in a $3\times 3$ array. Then for any continuous map $f:\Delta \to \mathbb{R}^3$ it is possible to partition the rows or the columns of the vertex array into two parts, such that the disjoint faces $\sigma$ and $\tau$ induced by the two parts satisfy $f(\sigma)\cap f(\tau) \neq \emptyset$.  Our result also has consequences for geometric transversals and topological Helly theorems. 
\end{abstract}

\maketitle

\section{Introduction}

\subsection{Background}
Tverberg's theorem \cite{Tve66} is a famous  result of combinatorial geometry asserting that any set of $(d+1)(p-1)+1$ points in $\mathbb{R}^d$ can be partitioned into $p$ parts such that their convex hulls have a point in common. 
This theorem has been extremely influential, with a number of applications and generalizations, and was the starting point of several research directions within geometric and topological combinatorics. See e.g. \cite{Bara-Sob} for a recent survey. 

Here is a standard reformulation of Tverberg's theorem. Let $X$ be a set of $n$ points in $\mathbb{R}^d$, and let $\Delta(n)$ denote the $(n-1)$-dimensional simplex with vertex set $V$. 
(Note that here, somewhat unconventionally, $n$ denotes the number of vertices and {\em not} the dimension.)
The point set $X$ ``induces'' a unique {\em affine} map $f : \Delta(n) \to \mathbb{R}^d$, with $f(V) = X$, which maps a face of $\Delta$ to the convex hull of the corresponding points in $X$. From this point of view, Tverberg's theorem states that {\em for any affine map} \[f: \Delta(n) \to \mathbb{R}^d\] with $n=(d+1)(p-1)+1$, there exists {\em pairwise disjoint} faces $\sigma_1, \sigma_2, \dots, \sigma_p$ of $\Delta(n)$ such that 
\[f(\sigma_1) \cap f(\sigma_2) \cap \cdots \cap f(\sigma_p) \neq \emptyset.\]

B{\'a}r{\'a}ny, Shlosman, and Sz{\"u}cs \cite{BSS} gave a topological generalization of Tverberg's theorem by showing that the statement above, in the case when $p$ is prime, is in fact true for any {\em continuous map} $f$. Their result was further extended by Volovikov \cite{volov96} who showed that the theorem also holds when $p$ is a prime power. It was a long-standing open problem whether or not the topological Tverberg theorem holds for arbitrary integers $p$, which was eventually disproved by Frick \cite{F}.

\smallskip

The goal of this note is to introduce a new type of generalization of the topological Tverberg theorem.

\subsection{Main result} We start by fixing some notation. 
For positive integers $n$ and $m$, 
let $\simplex{m}{n}$ denote the $(n^m-1)$-dimensional simplex on the vertex set 
\[V = [n]\times [n] \times \cdots \times [n] = [n]^m.\]
Note that $\simplex{1}{n} = \Delta(n)$ is the $(n-1)$-dimensional simplex. 
For $i \in [m]$ and a subset $A\subset [n]$ we define the vertex subset
\[\vertex{i}{A} = \big\{ \: (x_1, x_2, \dots, x_n) \in [n]^m : x_i \in A\: \big\} \subset V.\]
In particular, we have $\vertex{i}{\emptyset} = \emptyset$. 
Observe that a fixed $i \in [m]$ and partition $A_1 \cup A_2 \cup \cdots \cup A_p = [n]$, induces a partition of the vertices of $\simplex{m}{n}$
\[ \vertex{i}{A_1} \cup \vertex{i}{A_2} \cup \cdots \cup \vertex{i}{A_p} = V.\]
Finally, for $i \in [m]$ and a subset $A\subset [n]$ let $\face{i}{A}$ denote the face of $\simplex{m}{n}$ induced by the vertices $\vertex{i}{A}$. 
We are ready to state our main result.

\begin{theorem} \label{t:tv-bound}
Let $d$, $m$, and $p$ be positive integers, with $p$ a prime power, 
and let $n =  \lceil(\frac{d}{m}+1)(p-1) + \frac{1}{m}\rceil$. For any continuous map $f : \simplex{m}{n} \to \mathbb{R}^d$ 
 there exists an integer $i\in [m]$ and a partition $A_1 \cup A_2 \cup \cdots \cup A_p = [n]$ such that
\[ f\big(\face{i}{A_1}\big) \cap
f\big(\face{i}{A_2}\big) \cap \cdots \cap f\big(\face{i}{A_p}\big) \neq \emptyset.\]
\end{theorem}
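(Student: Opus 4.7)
The plan is a configuration-space/test-map argument in the spirit of B\'ar\'any--Shlosman--Sz\"ucs. Let $E=(\simplex{m}{n})^{*p}$ be the $p$-fold simplicial join, equipped with the free cyclic $\mathbb{Z}/p$-action permuting the $p$ join-factors, and for each $i\in[m]$ let $\pi_i\colon\simplex{m}{n}\to\Delta(n)$ be the affine extension of the coordinate map $v\mapsto v_i$. Set
\[
J_i\defeq(\pi_i^{*p})^{-1}\!\big((\Delta(n))^{*p}_\Delta\big)\subseteq E,\qquad C\defeq J_1\cup\cdots\cup J_m.
\]
A point $\sum_{j}t_j x^{(j)}\in E$ lies in $C$ precisely when there exist $i\in[m]$ and a partition $A_1\cup\cdots\cup A_p=[n]$ with $x^{(j)}\in\face{i}{A_j}$ for every $j$ with $t_j>0$, so the theorem is equivalent to $f^{*p}(C)\cap D\neq\emptyset$, where $D\subset(\mathbb{R}^d)^{*p}$ is the thin diagonal.

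Assume for contradiction that this intersection is empty. Then $f^{*p}|_C$ is a $\mathbb{Z}/p$-equivariant map $C\to(\mathbb{R}^d)^{*p}\setminus D$, and the target is $\mathbb{Z}/p$-equivariantly homotopy equivalent to the sphere $S^{(d+1)(p-1)-1}$ with its standard free action. Since $p$ is a prime power, Volovikov's lemma yields the desired contradiction as soon as we show
\[
\mathrm{ind}_{\mathbb{Z}/p}(C)\;\geq\;(d+1)(p-1).
\]
A first ingredient: each $J_i$ is $\mathbb{Z}/p$-equivariantly homotopy equivalent to $(\Delta(n))^{*p}_\Delta$. Indeed, picking any affine section $s_i\colon\Delta(n)\to\simplex{m}{n}$ of $\pi_i$ (say $k\mapsto(k,\ldots,k)$), the straight-line homotopy $(1-\tau)\,\mathrm{id}+\tau\,(s_i\circ\pi_i)^{*p}$ preserves every $\pi_i$-projection coordinate-wise and hence stays inside $J_i$, retracting it onto $s_i^{*p}((\Delta(n))^{*p}_\Delta)$. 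This gives $\mathrm{ind}_{\mathbb{Z}/p}(J_i)=n-1$, which by itself is not enough.

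The hypothesis rewrites as $mn\geq(d+m)(p-1)+1$, so $mn-1\geq(d+1)(p-1)+(m-1)(p-1)\geq(d+1)(p-1)$; the target lower bound is therefore $\mathrm{ind}_{\mathbb{Z}/p}(C)\geq mn-1$. The plan to achieve it is to construct a $\mathbb{Z}/p$-equivariant map from the external join
\[
K \;=\; \underbrace{(\Delta(n))^{*p}_\Delta \ast \cdots \ast (\Delta(n))^{*p}_\Delta}_{m \text{ factors}}
\]
into $C$; by the standard additivity of $\mathbb{Z}/p$-index under joins with free actions, $\mathrm{ind}_{\mathbb{Z}/p}(K)=mn-1$, and monotonicity of the index under equivariant maps then completes the proof. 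The principal obstacle is that the naive formula $\sum_i s_i\cdot s_i^{*p}(\xi_i)$ sending a point of $K$ into $E$ does \emph{not} land in $C$: as soon as two sections $s_i,s_{i'}$ with $i\neq i'$ both contribute positively to the same join-coordinate $x^{(j)}$, the projection $\pi_{i_0}(x^{(j)})$ acquires a ``parasitic'' vertex coming from the fixed value $s_{i'}(\cdot)_{i_0}$, destroying the required disjointness of supports across $j$. The technical heart of the proof is therefore either to engineer a more careful equivariant lift $K\to C$ (for instance by choosing pairwise ``generic'' sections and perturbing weights so that the parasitic vertices cancel), or to bypass the map $K\to C$ altogether and read off the lower bound on $\mathrm{ind}_{\mathbb{Z}/p}(C)$ directly from a Mayer--Vietoris spectral-sequence analysis of the cover $\{J_i\}_{i=1}^m$, using that each multi-intersection $J_{i_1}\cap\cdots\cap J_{i_r}$ is the preimage of a deleted join under the simultaneous projection $(\pi_{i_1},\ldots,\pi_{i_r})^{*p}$ and retracts $\mathbb{Z}/p$-equivariantly in exactly the same way.
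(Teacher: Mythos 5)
Your setup matches the paper's: your space $C=J_1\cup\cdots\cup J_m$ is the geometric realization of the paper's configuration space $K$, the test map is the same, the reduction to showing $\mathrm{ind}_{\mathbb{Z}_p}(C)\geq(d+1)(p-1)$ via Volovikov is right, and your analysis of each individual $J_i$ as an equivariant retract of the $(n-2)$-connected deleted join is correct. The gap is the index bound, and it is not a detail you can repair. You aim for $\mathrm{ind}_{\mathbb{Z}_p}(C)\geq mn-1$ by mapping the $m$-fold join of deleted joins into $C$; but that would establish the theorem already for $n\geq\lceil((d+1)(p-1)+1)/m\rceil$, which at $d=m=3$, $p=2$ equals $2$, while the theorem's value $n=3$ is known to be optimal when $m\mid d$ (\cite[Remark 3]{DHL}). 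So the equivariant map you are after cannot exist for general parameters, and the ``parasitic vertices'' obstruction you identify is an essential feature, not a technicality. Your fallback Mayer--Vietoris plan hits the same ceiling; moreover the multi-intersections $J_{i_1}\cap\cdots\cap J_{i_r}$ do not retract ``in exactly the same way,'' since the homotopy $(1-\tau)\,\mathrm{id}+\tau\,(s\circ\pi_{i_\ell})^{*p}$ preserves membership in $J_{i_\ell}$ but not generally in $J_{i_{\ell'}}$ for $\ell'\neq\ell$.

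What the paper does instead is deliberately give up some index in exchange for a map that actually lands in $C$. It passes to the nerve $N$ of the facets of $K$ (with a $\mathbb{Z}_p$-map $N\to K$ via barycentric subdivision, Lemma~\ref{lem:NerveMap}) and builds $T=M*L_2*\cdots*L_m\subset N$, where $M$ is indexed by all ordered \emph{disjoint unions} $A_1\uplus\cdots\uplus A_p=[n]$ under coordinate $1$, hence $(n-2)$-connected, while each $L_i$ for $i\geq2$ is indexed only by ordered \emph{partitions} (all parts nonempty) under coordinate $i$, hence only $(n-p-1)$-connected. Insisting on nonempty parts in the $L_i$ is exactly what allows one to exhibit a common vertex in $V(K)$ for any collection of facets drawn from $M,L_2,\dots,L_m$, so that $T$ really is a subcomplex of $N$ (Lemma~\ref{lem:T}). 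The resulting connectivity is $mn-2-(m-1)(p-1)$; that deficit of $(m-1)(p-1)$ relative to your hoped-for $mn-2$ is precisely what produces the theorem's stated value of $n$. This asymmetric ``one unconstrained factor, $m-1$ constrained factors'' construction is the ingredient your proposal is missing.
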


Observe that for $m=1$ we recover the usual topological Tverberg theorem. The proof of Theorem \ref{t:tv-bound} when $p$ is prime will be given in the following section. In Section 3 we explain how to extend the proof to prime powers. 
Before getting to the proof we make a few remarks and point out some consequences of Theorem \ref{t:tv-bound}. 

\smallskip

\subsection{Remarks}
Theorem \ref{t:tv-bound} is a topological generalization of a recent result by Dobbins, Lee, and the first author \cite{DHL}. Their result was motivated by the colorful Helly theorem and was formulated in terms of Tverberg partitions for families of convex sets that satisfy the ``colorful Helly hypothesis'', but is easily seen to be equivalent to the {\em affine} case of Theorem \ref{t:tv-bound}. While their proof also uses methods from equivariant topology, and therefore requires $p$ to be  prime (power), convexity also plays a crucial role in their proof (in particular the use the hyperplane separation theorem and Sarkaria's tensor method) and is not adaptable to deal with the general continuous case. Let us also note that they showed that when $d$ is divisible by $m$, then the value of $n$ is optimal (even in the affine case) \cite[Remark 3]{DHL}.

\smallskip

The affine case is interesting in its own right due to the following connection to geometric transversal theory.  
A theorem of Montejano \cite{km2011, mont2013} states that if we take three red convex sets $R_1$, $R_2$, $R_3$ and three blue convex sets $B_1, B_2, B_3$ in $\mathbb{R}^3$ where $R_i \cap B_j \neq \emptyset$ for all $i$ and $j$, then there is a {\em line transversal} to the red sets or to the blue sets. 
To see how this result follows from (the affine case of) Theorem \ref{t:tv-bound}, choose a point $p_{i,j} \in R_i \cap B_j$ for every $i$ and $j$. 
Thus the points $p_{i,j}$ are naturally indexed by the vertices of $\simplex{2}{3}$, and there is an affine map $f: \simplex{2}{3} \to \mathbb{R}^3$ 
such that the triangle $f\big( \face{1}{i} \big)$ is contained in $R_i$ and the triangle $f\big( \face{2}{j} \big)$ is contained in $B_j$ for every $i$ and $j$. (This uses the fact that the sets are convex.)
By Theorem \ref{t:tv-bound} there is a partition of the red triangles or the blue triangles into two parts such that their convex hulls intersect. But it is easily seen that such a partition implies that the triangles can be intersected by a single line. (Again this uses the convexity of the sets.)
Since these triangles are contained in distinct sets of the same color, the result follows. For other values of $d$, $m$, and $p$ one can obtain similar types of results on higher dimensional transversals. (See \cite{DHL} and \cite{km2011} for further details.)

\smallskip

Another interesting instance of Theorem \ref{t:tv-bound} is when $m = d+1$. In the case when $f$ is an affine map we get families $F_1$, $F_2$, $\dots$, $F_{d+1}$ of convex sets in $\mathbb{R}^d$, where  
$F_i = \Big\{ f\big( \face{i}{j} \big) \Big\}_{j=1}^n$. 
Since
\[ (j_1, j_2, \dots, j_{d+1}) \in
\face{1}{\{j_1\}} \cap
\face{2}{\{j_2\}} \cap \cdots \cap
\face{d+1}{\{j_{d+1}\}}  \]
for every $(j_1, j_2, \dots, j_{d+1}) \in [n]^{d+1}$, it follows that 
the families $F_1$, $F_2$, $\dots$, $F_{d+1}$ satisfy the hypothesis of the colorful Helly theorem \cite{col-hell}. Therefore there exists a point that intersects every member of one of the families $F_i$.  In other words, there exists an $i\in [d+1]$ such that 
\[\textstyle \bigcap_{j =1}^n f\big( \face{i}{\{j\}}  \big) \neq \emptyset.\]

We do {\em not} claim that this is a consequence of Theorem \ref{t:tv-bound}, however,
when $f$ is an arbitrary continuous map we obtain a ``weak'' colorful topological Helly theorem stated below. Note that this differs from the colorful topological Helly theorem of Kalai and Meshulam \cite{KM} as it does not require that the sets form a good cover, but the conclusion we get is also weaker. 


\begin{corollary} \label{cor:ch}
For any continuous map $f : \simplex{d+1}{n} \to \mathbb{R}^d$  there exists an integer $i \in [d+1]$ and a subset $S\subset [n]$ with $|S|\geq \frac{1}{2d+1}n-o(n)$ such that 
\[\textstyle \bigcap_{j\in S} f\big( \face{i}{\{j\}}  \big) \neq \emptyset.\]
\end{corollary}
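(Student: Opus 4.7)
The plan is to apply Theorem \ref{t:tv-bound} with $m = d+1$ at the largest prime power $p$ that the given $n$ allows, and then observe that when $p$ is close to $\frac{(d+1)n}{2d+1}$ the resulting Tverberg-type partition of $[n']$ must contain many singleton parts. Each singleton part $A_k = \{j\}$ has $\face{i}{A_k} = \face{i}{\{j\}}$, so the common point $x \in \bigcap_k f(\face{i}{A_k})$ produced by the theorem automatically lies in $f(\face{i}{\{j\}})$ for every such $j$; collecting those singleton labels into the set $S$ will give the corollary.

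First I would choose $p$ to be the largest prime power with $n' \defeq \lceil \frac{(2d+1)(p-1)+1}{d+1} \rceil \leq n$. By the density of primes (hence of prime powers) one can arrange $p \geq \frac{(d+1)n}{2d+1} - o(n)$. Next I would apply Theorem \ref{t:tv-bound} to the restriction of $f$ to the sub-simplex $\simplex{d+1}{n'} \subset \simplex{d+1}{n}$ determined by the first $n'$ labels in each of the $d+1$ coordinates. This yields an index $i \in [d+1]$, a partition $A_1 \cup \cdots \cup A_p = [n']$, and a point $x$ lying in $f(\face{i}{A_k})$ for every $k$. Let $S$ be the union of the singleton parts among $A_1, \dots, A_p$. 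Since $\sum_k |A_k| = n'$ and every non-singleton part contributes at least $2$, we obtain $|S| \geq 2p - n' \geq 2p - n$, and by construction $x \in f(\face{i}{\{j\}})$ for every $j \in S$.

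The target bound then reduces to the arithmetic $2p - n \geq \frac{2(d+1)n}{2d+1} - n - o(n) = \frac{n}{2d+1} - o(n)$. I do not expect a serious obstacle: the conceptual step is the realization that singletons in the Tverberg-type partition immediately witness the weak colorful Helly conclusion, and everything else is a pigeonhole count on part sizes together with a standard prime-density estimate that accounts for the $o(n)$ loss.
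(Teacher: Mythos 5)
Your proposal is correct and follows essentially the same route as the paper: apply Theorem~\ref{t:tv-bound} with $m=d+1$ at a prime (power) $p$ near $\frac{d+1}{2d+1}n$, and observe that the average part size is below $2$, so pigeonhole forces roughly $2p-n' \geq \frac{n}{2d+1}-o(n)$ singleton parts, each of which witnesses the Helly-type conclusion; the $o(n)$ loss is absorbed by a prime-gap estimate. The only cosmetic difference is that the paper fixes $n = \lfloor\frac{2d+1}{d+1}p\rfloor$ first and invokes \cite{BHP} for general $n$, while you pick the largest admissible $p$ and restrict $f$ to the sub-simplex on $[n']^{d+1}$; these are interchangeable.
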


\begin{proof}
Suppose first that $n = \big\lfloor \frac{2d+1}{d+1}p \big\rfloor$ for some prime (power) $p$. Since $n\geq \frac{2d+1}{d+1}(p-1)+\frac{1}{d+1}$ it follows from Theorem \ref{t:tv-bound}
that there is an integer $i\in [d+1]$ and a partition $A_1\cup A_2 \cup \cdots \cup A_p = [n]$ such that
\[f \big( \face{i}{A_1} \big) \cap f \big( \face{i}{A_2} \big) \cap \cdots \cap f \big( \face{i}{A_{p}} \big) \neq \emptyset. \] Since the average size of a part $A_i$ is at most $\frac{2d+1}{d+1} < 2$ there are at least $\frac{1}{d+1}p \ge \frac{1}{2d+1}n$ of the parts $A_i$ that have size 1, which is what we wanted to show. The result for general $n$ follows from the fact that  the interval $[n, n+n^{0.525}]$ contains a prime for all sufficiently large $n$ (see \cite{BHP}).
\end{proof}

\section{Proof of Theorem \ref{t:tv-bound}}

\subsection{Notation}
We use standard notions and terminology from topological combinatorics, in particular the \textit{configuration space /  test map} scheme presented for instance in \cite{matousek2003using}. One crucial detail to keep in mind is the distinction between {partitions} and {disjoint unions}. Recall that 
\[A_1 \cup A_2 \cup \cdots \cup A_k = [n]\]
is a {\em partition} if all the sets $A_i$ are nonempty and pairwise disjoint. When we say {\em disjoint union}, on the other hand, we allow for some of the $A_i$ to be empty. For partitions or disjoint unions, the order of the parts does not matter, but in the case when the order of parts plays a role we use the notation
\[A_1 \uplus A_2 \uplus \cdots \uplus A_k.\]

\subsection{Proof outline}
Fix integers $m\geq 1$, $n\geq 2$, and $p\geq 2$, with $p$ prime. We define a simplicial complex $K=K(n,m,p)$ on a vertex set consisting of an ordered union of $p$ disjoint copies of $[n]^m$. That is, 
\[ V(K) = \underset{\text{part 1}}{[n]^m} \uplus \underset{\text{part 2}}{[n]^m} \uplus \cdots \uplus \underset{\text{part $p$}}{[n]^m}. \]
 The facets of $K$ are given by 
\[\face{i}{A_1} \uplus \face{i}{A_2} \uplus \cdots \uplus \face{i}{A_p}\]
for all $1\leq i\leq m$ and for all disjoint unions $A_1\uplus A_2 \uplus \cdots \uplus A_p = [n]$. Here it should be understood that $\face{i}{A_j}$ belongs to the $j$th part of the vertex set. Observe that the cyclic group $\mathbb{Z}_p$ acts on $K$ by cyclically shifting the parts of the union. Thus $K$ is a $\mathbb{Z}_p$-invariant subcomplex of $\big( \simplex{m}{n} \big)^{*p}_{\Delta(2)}$, the $p$-fold pairwise deleted join of $\simplex{m}{n}$. (The complex $K$ is the so-called ``configuration space'' for our problem.)

\smallskip

There are two main steps to the proof of Theorem \ref{t:tv-bound}. The first step is to show that if the conclusion of the theorem fails for some continuous map $f : \simplex{m}{n} \to \mathbb{R}^d$, then we can construct a $\mathbb{Z}_p$-map (the so-called ``test map'') 
\[ K \to  S^{(d+1)(p-1)-1} \]

\smallskip

The second step is to  construct a $\mathbb{Z}_p$-complex $T$ together with a
$\mathbb{Z}_p$-map  $T \to K$. The 
complex $T$ will be a subcomplex of the nerve complex of the facets of $K$. We will show that when $n$ satisfies the bound in the hypothesis of Theorem \ref{t:tv-bound}, then the complex $T$ is $\big( (d+1)(p-1)-1 \big)$-connected.
Therefore, a counter-example to Theorem \ref{t:tv-bound} would give us $\mathbb{Z}_p$-map  $T \to S^{(d+1)(p-1)-1}$, contradicting Dold's Theorem.  

\begin{theorem}[Dold \cite{dold}]
    Let $G$ be a finite, nontrivial group, $X$ an $n$-connected $G$-space, and $Y$ a paracompact topological space with a free $G$-action and dimension at most $n$. Then there does not exist a $G$-map $X\rightarrow Y$.
\end{theorem}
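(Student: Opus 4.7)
The plan is to prove Dold's theorem via the Borel construction, turning the hypothetical equivariant map into a cohomological impossibility. Suppose for contradiction that a $G$-map $f : X \to Y$ exists. Let $EG$ be a contractible free $G$-space with quotient $BG = EG/G$. Taking Borel constructions, $f$ induces $\mathrm{id} \times_G f : EG \times_G X \to EG \times_G Y$ commuting with the projections to $BG$. Since $G$ acts freely on $Y$, the map $EG \times_G Y \to Y/G$ is a homotopy equivalence (its fibers are copies of $EG$), and composing with the classifying map $Y/G \to BG$ recovers the projection. Thus the fibration $\pi : EG \times_G X \to BG$ factors, up to homotopy, through $Y/G$.

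Next I would quantify how much $\pi$ ``sees'' of $BG$. Its fiber is $X$, which is $n$-connected, so the long exact sequence of homotopy groups for $X \to EG \times_G X \to BG$ yields $\pi_i(EG \times_G X) \cong \pi_i(BG)$ for $i \leq n$ and a surjection in degree $n+1$. By a standard Whitehead / relative Hurewicz argument, the induced map $\pi^* : H^k(BG; M) \to H^k(EG \times_G X; M)$ is then an isomorphism for $k \leq n$ and injective for $k = n+1$, for any coefficient module $M$.

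The final step combines a dimension count with nonvanishing of group cohomology. Since $G$ is finite and acts freely on the paracompact space $Y$, the quotient $Y/G$ is paracompact of covering dimension at most $n$, so $\check H^{n+1}(Y/G; \mathbb{F}_p) = 0$ for any prime $p$. On the other hand, for $p$ a prime dividing $|G|$, the group cohomology $H^{n+1}(BG; \mathbb{F}_p)$ is nonzero: via the transfer, the restriction to a Sylow $p$-subgroup $P$ is split injective, reducing us to the well-known fact that every nontrivial finite $p$-group has nontrivial mod-$p$ cohomology in every degree. Picking $\alpha \neq 0$ in $H^{n+1}(BG; \mathbb{F}_p)$, injectivity yields $\pi^* \alpha \neq 0$ in $H^{n+1}(EG \times_G X; \mathbb{F}_p)$, yet the factorization through $Y/G$ forces $\pi^* \alpha$ to lie in the image of $H^{n+1}(Y/G; \mathbb{F}_p) = 0$, a contradiction. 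The main obstacle I expect is handling the dimension bound in the stated paracompact generality --- one must use \v{C}ech or sheaf cohomology (where $\dim \leq n$ implies vanishing above degree $n$) and verify that freeness of a finite-group action preserves covering dimension upon passing to the quotient.
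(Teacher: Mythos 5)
The paper does not actually prove this statement---Dold's theorem is quoted as a black box from \cite{dold}---so your argument has to be judged on its own. The Borel-construction strategy (compare $EG\times_G X\to BG$ with the factorization through $Y/G$, use the connectivity of the fiber $X$ to get injectivity of $\pi^*$ in degree $n+1$, and contradict the dimension bound on $Y/G$) is a legitimate and essentially standard route to Dold's theorem. However, there is one genuine gap: the nonvanishing of $H^{n+1}(BG;\mathbb{F}_p)$. The transfer argument you invoke runs in the wrong direction. Transfer gives $\mathrm{tr}\circ\mathrm{res}=[G:P]\cdot\mathrm{id}$, hence that the restriction $H^{*}(BG;\mathbb{F}_p)\to H^{*}(BP;\mathbb{F}_p)$ is (split) injective; this embeds $H^{n+1}(BG)$ \emph{into} $H^{n+1}(BP)$ and therefore cannot transport nonvanishing from the $p$-group back to $G$. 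The claim itself is false for general finite $G$: for $G=A_5$ one has $H^{1}(BA_5;\mathbb{F}_p)=\mathrm{Hom}(A_5,\mathbb{F}_p)=0$ for every prime $p$, so in the case $n=0$ your contradiction evaporates even though the theorem is true there.

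The repair is cheap and is the standard move: since $G$ is finite and nontrivial, Cauchy's theorem gives a subgroup $H\cong\mathbb{Z}_p\leq G$ for some prime $p$ dividing $|G|$. A $G$-map $X\to Y$ is in particular an $H$-map, the restricted $H$-action on $Y$ is still free, $X$ is still $n$-connected, and $\dim Y\leq n$ is unchanged; now run your argument with $H$ in place of $G$, using $H^{k}(B\mathbb{Z}_p;\mathbb{F}_p)\cong\mathbb{F}_p$ for all $k\geq 0$. With that substitution---and granting the point-set issues you already flag (working in \v{C}ech/sheaf cohomology so that $\dim(Y/G)\leq n$ kills degree $n+1$, the fact that a free action of a finite group on a Hausdorff paracompact space makes $Y\to Y/G$ a covering with $\dim(Y/G)=\dim Y$, and treating the Borel-construction comparisons as weak equivalences)---the argument closes correctly.
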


\subsection{Proof}
Since our configuration space $K$ is a subcomplex of $\big(  \simplex{m}{n} \big)^{*p}_{\Delta(2)}$,  the points of $K$ may be represented as {\em formal convex combinations}
\[ t_1x_1\oplus t_2 x_2 \oplus \cdots \oplus t_px_p \]
where 
\begin{itemize}
\item $t_1+\cdots+t_p = 1$ and $t_i \geq 0$ for all $i$, 
\item each $x_i$ is in the simplex spanned by the $i$th part of $V(K)$, and 
\item  $\textrm{supp}(x_1) \cup \cdots \cup \textrm{supp}(x_p)$ is a face of $K$, i.e., there is some $i$ and a disjoint union $A_1 \cup \cdots \cup A_p = [n]$ such that $\textrm{supp}(x_j) \subset \face{i}{A_j}$ for all $1\leq j\leq p$.
\end{itemize}
Note that there is a natural action of $\symg{p}$, the symmetric group on $p$ elements, on $K$ given by
\begin{multline*}
\pi\: (t_1x_1\oplus t_2x_2 \oplus \cdots \oplus t_px_p) = \\ t_{\pi^{-1}(1)}x_{\pi^{-1}(1)} \oplus t_{\pi^{-1}(2)}x_{\pi^{-1}(2)} \oplus  \cdots \oplus t_{\pi^{-1}(p)}x_{\pi^{-1}(p)}.
\end{multline*}
Furthermore, the subgroup of $\symg{p}$ generated by the permutation $(1\, 2\,\cdots\,p)$ (written in cycle notation) induces a free $\mathbb{Z}_p$-action on $K$.

\smallskip

Recall that $\big( \mathbb{R}^d \big)^{*p}_\Delta$, the $p$-fold ($p$-wise) deleted join of $\mathbb{R}^d$ is a subset of the $p$-fold join of $\mathbb{R}^d$, and is given by
\[\textstyle
\big( \mathbb{R}^d \big)^{*p}_\Delta = \big( \mathbb{R}^d \big)^{*p} \setminus \left\{\frac{1}{p}x \oplus \frac{1}{p}x \oplus \cdots \oplus \frac{1}{p}x : x\in \mathbb{R}^d\right\},
\]
Here $\symg{p}$ acts on $\big( \mathbb{R}^d \big)^{*p}_\Delta$ in a natural way by permuting the parts, and in particular,  $\big( \mathbb{R}^d \big)^{*p}_\Delta$ has  a free $\mathbb{Z}_p$-action when $p$ is prime.

\smallskip

Let $f:\simplex{m}{n} \rightarrow \mathbb{R}^d$ be a continuous map as in Theorem \ref{t:tv-bound}, and assume for contradiction that 
\[f\big( \face{i}{A_1} \big) \cap f\big( \face{i}{A_2} \big) \cap \cdots \cap f\big( \face{i}{A_p} \big) = \emptyset \]
for every $i \in [m]$ and every partition $A_1 \cup A_2 \cup \cdots \cup A_p = [n]$. 
This assumption allows us to define the test map $\tilde{f}: K \rightarrow \big( \mathbb{R}^d \big)^{*p}_\Delta$  by
\[
t_1x_1\oplus t_2x_2 \oplus \cdots \oplus t_px_p \mapsto t_1f(x_1) \oplus t_2f(x_2) \oplus \cdots \oplus t_pf(x_p),
\] 
which is a $\mathbb{Z}_p$-map. Since there exists a $\mathbb{Z}_p$-map $\big( \mathbb{R}^d \big)^{*p}_\Delta \rightarrow S^{(d+1)(p-1)-1}$ where $S^{(d+1)(p-1)-1}$ is equipped with a free $\mathbb{Z}_p$-action (see e.g. \cite[Proposition 6.3.2]{matousek2003using}), we obtain the following.

\begin{lemma} \label{lem:test}
If $f\big( \face{i}{A_1} \big) \cap f\big( \face{i}{A_2} \big) \cap \cdots \cap f\big( \face{i}{A_p} \big) = \emptyset$ for every $i\in [m]$ and every partition $A_1\cup A_2 \cup \cdots \cup A_p = [n]$, then there exists a $\mathbb{Z}_p$-map $K \to S^{(d+1)(p-1)-1}$.
\end{lemma}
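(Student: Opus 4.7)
The plan is to verify that the map $\tilde{f}$ described just above the lemma statement is indeed a well-defined continuous $\mathbb{Z}_p$-map into the deleted join $(\mathbb{R}^d)^{*p}_\Delta$, and then compose with the standard $\mathbb{Z}_p$-map $(\mathbb{R}^d)^{*p}_\Delta \to S^{(d+1)(p-1)-1}$ cited from \cite{matousek2003using}. Continuity of $\tilde{f}$ on the formal-convex-combination representation is immediate from continuity of $f$, and the $\mathbb{Z}_p$-equivariance is built into the formula (the $\mathbb{Z}_p$-action on both $K$ and $(\mathbb{R}^d)^{*p}_\Delta$ cyclically permutes the parts, and $\tilde{f}$ preserves this). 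So the only real content is to check that the image of $\tilde{f}$ avoids the thin diagonal
\[ D = \bigl\{ \tfrac{1}{p} y \oplus \tfrac{1}{p} y \oplus \cdots \oplus \tfrac{1}{p} y : y \in \mathbb{R}^d \bigr\}. \]

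Suppose for contradiction that $\tilde{f}(t_1 x_1 \oplus \cdots \oplus t_p x_p) \in D$. Then, by the definition of the join, we must have $t_j = 1/p$ for every $j \in [p]$ and $f(x_1) = f(x_2) = \cdots = f(x_p) = y$ for a common point $y \in \mathbb{R}^d$. Because the point $t_1 x_1 \oplus \cdots \oplus t_p x_p$ lies in $K$, there is some index $i \in [m]$ and a disjoint union $A_1 \uplus A_2 \uplus \cdots \uplus A_p = [n]$ with $\mathrm{supp}(x_j) \subset \face{i}{A_j}$ for every $j$.

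Since $t_j = 1/p > 0$, each $x_j$ is a genuine point of the simplex spanned by the $j$th copy of $[n]^m$, so $\mathrm{supp}(x_j) \neq \emptyset$, which forces $A_j \neq \emptyset$ for every $j$. Therefore $A_1 \cup A_2 \cup \cdots \cup A_p = [n]$ is in fact a \emph{partition}. But then $y = f(x_j) \in f\bigl(\face{i}{A_j}\bigr)$ for each $j$, so
\[ y \in f\bigl(\face{i}{A_1}\bigr) \cap f\bigl(\face{i}{A_2}\bigr) \cap \cdots \cap f\bigl(\face{i}{A_p}\bigr), \]
contradicting the hypothesis. Hence $\tilde{f}$ maps $K$ into $(\mathbb{R}^d)^{*p}_\Delta$ as claimed.

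Composing $\tilde{f}$ with the standard $\mathbb{Z}_p$-map $(\mathbb{R}^d)^{*p}_\Delta \to S^{(d+1)(p-1)-1}$ (which sends a point of the deleted join to the unit vector in the $(d+1)(p-1)$-dimensional orthogonal complement of the diagonal after centering and normalizing, as in \cite[Proposition 6.3.2]{matousek2003using}) produces the desired $\mathbb{Z}_p$-map $K \to S^{(d+1)(p-1)-1}$. The only step requiring any care is the disjoint-union-versus-partition distinction exploited above; everything else follows formally from the configuration space / test map setup.
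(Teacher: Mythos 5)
Your proof is correct and follows essentially the same configuration-space/test-map route as the paper. The only difference is that you spell out the verification that $\tilde{f}$ avoids the thin diagonal (via the observation that positive join-coefficients force nonempty supports, hence a genuine partition), a step the paper leaves implicit when it simply asserts that the hypothesis allows the test map to be defined into $\big(\mathbb{R}^d\big)^{*p}_\Delta$.
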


\smallskip

We now want to show that if $n\geq \lceil(\frac{d}{m}+1)(p-1) + \frac{1}{m}\rceil$, then there exists a $((d+1)(p-1)-1)$-connected $\mathbb{Z}_p$-complex $T$ and a $\mathbb{Z}_p$-map
$T \to K$, which gives us the desired contradiction to Dold's theorem. The construction of $T$ and the $\mathbb{Z}_p$-map requires several steps and takes up the remainder of this section.

\smallskip

Let $F$ denote the set of facets of $K$, and let $N$ be the \textit{nerve} of $F$, that is, the simplicial complex with vertex set $F$  whose faces are the subsets $S \subset F$ such that $\bigcap_{\tau \in S} \tau \neq \emptyset$. 
Then $\symg{p}$ acts on $N$ by 
\begin{multline*}
\pi\: \big(\face{i}{A_1} \uplus \face{i}{A_2} \uplus \cdots \uplus \face{i}{A_p} \big) = \\ 
\face{i}{A_{\pi^{-1}(1)}} \uplus \face{i}{A_{\pi^{-1}(2)}} \uplus \cdots \uplus \face{i}{A_{\pi^{-1}(p)}}.
\end{multline*}
Note that the action of the subgroup $\mathbb{Z}_p$ on $K$ is free, which gives us a free 
$\mathbb{Z}_p$-action on $N$.

\begin{lemma} \label{lem:NerveMap}
There is a $\mathbb{Z}_p$-map $N \to K$.
\end{lemma}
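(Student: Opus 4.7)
The plan is to construct a $\mathbb{Z}_p$-equivariant simplicial map $\bary{N}\to K$; since $|\bary{N}|=|N|$ as $\mathbb{Z}_p$-spaces, this yields the desired continuous equivariant map on geometric realizations.

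For each face $S=\{\tau_0,\dots,\tau_k\}$ of $N$, set $\rho(S)\defeq\bigcap_{\tau\in S}\tau$; by the definition of the nerve this is a nonempty face of $K$, and the $\mathbb{Z}_p$-action commutes with intersection, so $\pi\rho(S)=\rho(\pi S)$. The idea is to pick, for each face $S$ of $N$, a vertex $v(S)\in\rho(S)$ in a $\mathbb{Z}_p$-equivariant way ($v(\pi S)=\pi\,v(S)$) and then extend linearly to all of $\bary{N}$. Once $v$ is chosen, the resulting map is automatically simplicial: a simplex of $\bary{N}$ is a chain $S_0\subsetneq\cdots\subsetneq S_k$, and the reverse inclusions $\rho(S_0)\supseteq\cdots\supseteq\rho(S_k)$ imply that the image vertices $v(S_0),\dots,v(S_k)$ all lie in the single face $\rho(S_0)$ of $K$.

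To choose $v$ equivariantly, I would pick one vertex per $\mathbb{Z}_p$-orbit of faces of $N$ and propagate by the action; for this to be well-defined the action of $\mathbb{Z}_p$ on the faces of $N$ must be free, and verifying this freeness will be the main obstacle. The argument I have in mind is as follows. Suppose $\pi S=S$ for some non-trivial $\pi\in\mathbb{Z}_p$. Since $p$ is prime, $\pi$ partitions $S$ into orbits of size $1$ or $p$. An orbit of size $1$ would be a facet $\tau=\face{i}{A_1}\uplus\cdots\uplus\face{i}{A_p}$ with $\pi\tau=\tau$, forcing $A_1=\cdots=A_p$; combined with the disjointness of the parts, this gives $A_j=\emptyset$ for every $j$, contradicting $\bigcup_j A_j=[n]$. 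So every orbit has size $p$. Writing $\pi=(1\,2\,\cdots\,p)^c$ with $0<c<p$, in each part $j$ the vertex sets of $\tau$ and $\pi\tau$ meet in $\vertex{i}{A_j}\cap\vertex{i}{A_{j-c}}=\vertex{i}{A_j\cap A_{j-c}}=\emptyset$, so $\tau\cap\pi\tau=\emptyset$ in $K$, contradicting $\rho(S)\neq\emptyset$.

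Once freeness is established, the equivariant selection of $v(S)$ is routine, the extension to $\bary{N}$ is $\mathbb{Z}_p$-equivariant by construction, and the resulting continuous map $|N|=|\bary{N}|\to|K|$ is the required $\mathbb{Z}_p$-map.
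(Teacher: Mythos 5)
Your proposal is correct and shares the paper's key idea (barycentric subdivision of $N$ plus the observation that $\rho(S)=\bigcap_{\tau\in S}\tau$ is a nonempty face of $K$ reversing inclusion along chains), but the execution differs in a way worth noting. The paper sends the vertex $S$ of $\bary{N}$ to the vertex $\rho(S)$ of $\bary{K}$; this is a \emph{canonical} simplicial $\mathbb{Z}_p$-map $\bary{N}\to\bary{K}$ requiring no choices, and one finishes with the standard natural $\mathbb{Z}_p$-homeomorphisms $|N|\cong|\bary{N}|$ and $|\bary{K}|\cong|K|$. You instead aim directly at $K$, which forces you to pick an actual vertex $v(S)\in\rho(S)$ and hence to make these choices equivariantly. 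That requires the action of $\mathbb{Z}_p$ on the nonempty faces of $N$ to be free, and the bulk of your argument is devoted to establishing this. Your freeness proof is correct: a $\pi$-fixed facet of $K$ would force $A_1=\cdots=A_p$, impossible for a disjoint cover of $[n]$, and for $\pi\tau\neq\tau$ one has $\tau\cap\pi\tau=\bigcup_j\vertex{i}{A_j\cap A_{j-c}}=\emptyset$, contradicting $\rho(S)\neq\emptyset$. (The paper in fact asserts this freeness in the paragraph preceding the lemma, deducing it from freeness of the $\mathbb{Z}_p$-action on $K$, but does not need it in the proof of the lemma itself.) So your route is valid but does more work: the explicit orbit-by-orbit vertex selection and the freeness verification are extra steps that the canonical map $\bary{N}\to\bary{K}$ sidesteps entirely.
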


\begin{proof}
Recall that for a simplicial complex $L$, its barycentric subdivision $\bary{L}$ is the simplicial complex whose vertices are the nonempty faces of $L$ and simplices are the chains of nonempty faces of $L$ ordered by inclusion. Note that if $L$ is a $\mathbb{Z}_p$-complex, then the group action on $L$ induces a  group action on $\bary{L}$, and for this induced group action there are $\mathbb{Z}_p$-maps $\bary{L} \to L$ and $L \to \bary{L}$. 
Therefore it suffices to exhibit a $\mathbb{Z}_p$-map $\bary{N} \to \bary{K}$.

Let $\tau = \{\tau_1,\dots,\tau_k\}$ be a nonempty face of $N$. 
Observe that $\tau$ is a vertex of $\bary{N}$ and $\bigcap_{i=1}^k\tau_i$ is a vertex of $\bary{K}$. 
Moreover, for the chain 
\[ \{\tau_1\} \subset \{\tau_1, \tau_2\} \subset \cdots \subset \{\tau_1, \tau_2, \dots, \tau_k\},
\]
we have
\[ \tau_1 \supset (\tau_1\cap \tau_2) \supset \cdots \supset (\tau_1 \cap \tau_2 \cap \cdots \cap \tau_k) \]
and therefore $\tau \mapsto \bigcap_{i=1}^k \tau_i$ defines a simplicial map $\bary{N} \to \bary{K}$. Since  $\pi \hspace{.2ex} \tau = \{\pi \hspace{.2ex} \tau_1, \pi \hspace{.2ex} \tau_2, \dots, \pi \hspace{.2ex} \tau_k\}$ and $\pi \hspace{.2ex} \bigcap_{i=1}^k \tau_i = \bigcap_{i=1}^k \pi \hspace{.2ex}  \tau_i$, this gives us the desired $\mathbb{Z}_p$-map.
\end{proof}

\smallskip

We now define a family of vertex disjoint, $\mathbb{Z}_p$-invariant subcomplexes of $N$. Our goal is to show that their join gives us a highly connected subcomplex of $N$. 

\smallskip

Let $M$ be the {\em induced subcomplex} of $N$ on the vertex set 
\[ \big\{ \face{1}{A_1} \uplus \face{1}{A_2} \uplus \cdots \uplus \face{1}{A_p} \big\} \]
taken over all {\em disjoint unions} $A_1 \uplus A_2 \uplus \cdots \uplus A_p = [n]$. Clearly $M$ is a $\mathbb{Z}_p$-invariant subcomplex of $N$. We also observe that $M$ has precisely $n\cdot p$ facets. Indeed for every $j \in [n]$ and $k \in [p]$ we get a unique facet induced by the vertices of $M$ for which $j\in A_k$.

\begin{lemma}\label{lem:M}
    The complex $M$ is  $(n-2)$-connected.
\end{lemma}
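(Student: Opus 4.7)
The plan is to apply a Nerve Theorem to the cover of $M$ by its $np$ facets, verify that every nonempty intersection of these facets is contractible, and then recognize the nerve as an iterated join of discrete sets of known connectivity.

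First I would set up a cleaner combinatorial description. A disjoint union $A_1\uplus \cdots \uplus A_p = [n]$ is determined by the function $\chi\colon [n]\to [p]$ with $\chi(r)=k$ iff $r\in A_k$, so the vertices of $M$ are in bijection with $[p]^{[n]}$. Under this bijection the $np$ facets are indexed by $(j,k)\in [n]\times [p]$: the facet $F_{j,k}$ is the full simplex on $\{\chi \in [p]^{[n]}: \chi(j)=k\}$, and a vertex set $T\subseteq V(M)$ spans a face of $M$ if and only if all functions in $T$ agree at some coordinate, i.e.\ $T\subseteq F_{j,k}$ for some $(j,k)$.

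Second, I would compute intersections in this cover. For any $S\subseteq [n]\times [p]$,
\[ \textstyle \bigcap_{(j,k)\in S} F_{j,k} \;=\; \big\{\chi \in [p]^{[n]} : \chi(j)=k \text{ for all } (j,k)\in S\big\}. \]
This set is empty whenever $S$ contains two pairs with the same first coordinate but distinct second coordinates, and is otherwise a full simplex on the prescribed set of functions, hence contractible. Thus $\{F_{j,k}\}$ is a good cover of $M$ by subcomplexes, and the simplicial Nerve Theorem (see e.g.\ \cite{matousek2003using}) yields a homotopy equivalence $M\simeq \mathcal{N}$, where $\mathcal{N}$ is the simplicial complex on vertex set $[n]\times [p]$ whose faces are the subsets that are graphs of partial functions $[n]\to [p]$.

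Third, I would identify $\mathcal{N}$ with the $n$-fold join $[p]^{*n}$ of a discrete $p$-point set: a face of $\mathcal{N}$ picks at most one $k$ for each $j\in [n]$, which is exactly the condition defining the $n$-fold join. Since each discrete $p$-point set is nonempty (hence $(-1)$-connected) and the join of an $a$-connected space with a $b$-connected space is $(a+b+2)$-connected, iterating over $n-1$ joins shows that $[p]^{*n}$ is $(n-2)$-connected, so $M$ is $(n-2)$-connected. The only mild obstacle is verifying that the Nerve Theorem applies in this simplicial setting, but this reduces to the explicit description of the intersections above.
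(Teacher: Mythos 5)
Your proof is correct, and it is a ``mirror image'' of the paper's argument rather than the same one. The paper recognizes $M$ itself as the nerve of the cover of $(\Delta^{n-1})^{*p}_{\Delta(2)}$ by its facets: a facet of the deleted join is a disjoint union $A_1\uplus\cdots\uplus A_p=[n]$ (i.e.\ a total function $\chi\colon[n]\to[p]$), and two facets share a vertex iff the corresponding functions agree somewhere, which is exactly the face relation of $M$. The nerve theorem then transfers the known $(n-2)$-connectivity of $(\Delta^{n-1})^{*p}_{\Delta(2)}$ to $M$. You instead cover $M$ by \emph{its} facets $F_{j,k}$, verify the good-cover condition, and identify the resulting nerve $\mathcal{N}$ with $[p]^{*n}$. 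Since $[p]^{*n}$ is literally the same simplicial complex as $(\Delta^{n-1})^{*p}_{\Delta(2)}$ (a subset of $[n]\times[p]$ is a face of either iff no two pairs share a first coordinate), the two arguments express a tidy duality: $M$ and $[p]^{*n}$ are mutual nerves of each other's facet covers, and you run the nerve theorem in the opposite direction. A minor stylistic difference: the paper outsources the connectivity of the deleted join to a citation, while you derive it directly as an iterated join of $(-1)$-connected discrete sets, which is slightly more self-contained. Both are sound; your version is a clean and legitimate alternative.
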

\begin{proof}
Recall that the $p$-fold pairwise deleted join of the $(n-1)$-dimensional simplex $\big( \Delta^{n-1} \big)^{*p}_{\Delta(2)}$ is an $(n-2)$-connected space (see e.g. \cite[Corollary 6.4.4]{matousek2003using}). 
Let $B$ denote the nerve complex of the set of facets of $\big( \Delta^{n-1} \big)^{*p}_{\Delta(2)}$. By the nerve theorem $B$ is homotopy equivalent to $\big( \Delta^{n-1} \big)^{*p}_{\Delta(2)}$, and it follows that $B$ is also $(n-2)$-connected. 

By viewing $\Delta^{n-1}$ as the simplex on the vertex set $[n]$, a facet of $\big(\Delta^{n-1}\big)^{*p}_{\Delta(2)}$  (i.e. a vertex of $B$) can be written as  the disjoint union
\[ A_1 \uplus A_2 \uplus \cdots \uplus A_p = [n]. \]   
To complete the proof we simply observe that the map $V(B) \rightarrow V(M)$ defined by
\[ A_1 \uplus A_2 \uplus \cdots \uplus A_p \mapsto 
\face{1}{A_1} \uplus  \face{1}{A_2} \uplus \cdots \uplus \face{1}{A_p} \]
is a simplicial isomorphism between $B$ and $M$.
\end{proof}

For each $2\leq i\leq m$,  define a subcomplex of $L_i \subset N$ on the set of vertices
\[ \big\{ \face{i}{A_1} \uplus \face{i}{A_2} \uplus \cdots \uplus \face{i}{A_p}  \big\} \]
taken over all partitions $A_1 \uplus A_2 \uplus  \cdots \uplus A_p = [n]$. The faces of $L_i$ are the subsets
\[ \big\{ \face{i}{A_{j,1}} \uplus \face{i}{A_{j,2}} \uplus \cdots \uplus \face{i}{A_{j,p}} \big\}_{j=1}^k\]
where $\bigcap_{j=1}^k A_{j,\ell} \neq \emptyset$ for every $1\leq \ell \leq p$.

Clearly $L_i$ is a $\mathbb{Z}_p$-invariant subcomplex of $N$. We also observe that $L_i$ has precisely $\frac{n!}{(n-p)!}$ facets.  
Indeed, for every ordered $p$-tuple $(j_1, j_2, \dots, j_p)$ of $[n]$ we get a unique facet induced by the vertices of $L_i$ for which $j_1\in A_1$, $j_2\in A_2$, \dots, $j_p\in A_p$.

\begin{lemma}\label{lem:Li}
    For all $2\leq i\leq m$, the complex $L_i$ is $(n-p-1)$-connected.
\end{lemma}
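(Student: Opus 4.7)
The plan is to apply the nerve lemma twice to identify $L_i$ with an order complex, and then to establish $(n-p-1)$-connectivity of that order complex by induction on $n$.

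For the first reduction, each facet of $L_i$ indexed by an ordered $p$-tuple $(j_1,\dots,j_p)$ of distinct elements of $[n]$ is a full simplex, whose vertices are exactly the $p^{n-p}$ partitions $(A_1,\dots,A_p)$ with $j_\ell\in A_\ell$ for all $\ell$. The intersection of any family of such facets is again a simplex---nonempty precisely when the collective constraints $j_\ell^{(s)}\mapsto\ell$ form a consistent partial labeling of $[n]$---so by the nerve lemma $L_i$ is homotopy equivalent to the nerve $\tilde{L}_i$ of its facets. The vertex set of $\tilde{L}_i$ is the set of transversals $t:[p]\hookrightarrow[n]$, and its faces are the collections of transversals whose induced labelings agree on common image elements.

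Next, let $P$ denote the poset of surjective partial functions $\phi:S\to[p]$ with $S\subseteq[n]$, ordered by extension; its minimal elements are exactly the transversals. Covering the order complex $\Delta(P)$ by the principal filters $\Delta(P_{\geq t})$ for $t\in P_{\min}$ gives a good cover: each filter is a cone on its minimum $t$ (hence contractible), and every finite nonempty intersection is again a principal filter (at the join $\bigvee_s t_s$). The nerve of this cover is exactly $\tilde{L}_i$, so a second application of the nerve lemma yields $L_i\simeq\tilde{L}_i\simeq\Delta(P)$.

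It therefore suffices to show that $\Delta(P)$ is $(n-p-1)$-connected. Note $\Delta(P)$ is a pure simplicial complex of dimension $n-p$, since every maximal chain in $P$ starts from a transversal and grows one element at a time until reaching a full surjection. I would argue by induction on $n$, the base case $n=p$ being immediate as $P$ is then an antichain of $p!$ bijections. For the inductive step, decompose $P=P^0\sqcup P^1$ depending on whether $n\in\mathrm{dom}(\phi)$: then $P^0\cong P(n-1,p)$, so $\Delta(P^0)$ is $(n-p-2)$-connected by induction, while $P^1$ splits further as $\bigsqcup_{\ell\in[p]}P^1_\ell$ according to $\phi(n)$, and a further nerve-lemma argument identifies $\Delta(P^1_\ell)$ with $L_i(n-1,p-1)$, hence $(n-p-1)$-connected by induction. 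The hard part will be combining these pieces via the chains of $\Delta(P)$ that bridge $P^0$ and the $P^1_\ell$'s, which I would handle either by a Mayer--Vietoris argument---leveraging that every lower interval $P_{\leq\phi}$ decomposes as a product of Boolean pieces and so has contractible order complex---or via a discrete Morse matching on $\Delta(P)$ whose critical cells all lie in dimension $\geq n-p$.
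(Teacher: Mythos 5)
The paper's own proof is essentially a one-line reduction: it observes that $L_i$ is isomorphic to the complex $C_{n,p}$ of ordered partitions and then cites \cite[Lemma~2.1]{DHL} for the $(n-p-1)$-connectivity of $C_{n,p}$. You are instead attempting a self-contained proof, which is a legitimate and genuinely different route, so let me assess it on its own terms.

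Your two nerve-lemma reductions are correct. The facets of $L_i$ are indeed full simplices indexed by injections $[p]\hookrightarrow[n]$, any nonempty intersection of them is again a full simplex (cut out by a consistent partial labeling), so the nerve lemma gives $L_i\simeq\tilde L_i$. The second reduction also works: the principal filters $P_{\geq t}$ over minimal $t$ cover $\Delta(P)$, every nonempty finite intersection is a principal filter $P_{\geq\bigcup t_s}$ (pairwise compatibility of partial functions does imply collective compatibility), each is a cone, and the resulting nerve is exactly $\tilde L_i$. So the reduction to showing $\Delta(P)$ is $(n-p-1)$-connected is sound.

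The gap is in the inductive step, and you flag it yourself: ``the hard part will be combining these pieces.'' That is precisely where the proof has to do real work, and neither of your two sketches is carried out. Specifically: (i) the sets $\Delta(P^0)$ and $\Delta(P^1)$ do \emph{not} cover $\Delta(P)$, since chains can mix elements of $P^0$ and $P^1$, so a Mayer--Vietoris decomposition is not immediate; one would need to pass to a suitable neighborhood/star of one piece, identify its intersection with the other, and control its connectivity, none of which is done. (ii) Even granting a Mayer--Vietoris argument, it would only give homological connectivity; establishing simple connectivity (needed for the claimed homotopy connectivity when $n-p-1\geq 1$) requires a separate argument. (iii) The identification of $\Delta(P^1_\ell)$ with the $(n-1,p-1)$ complex is not literally an isomorphism: restricting $\phi$ to $[n-1]$ gives a partial function that is surjective onto $[p]\setminus\{\ell\}$ but may or may not also hit $\ell$, so the subposet is strictly larger than $P(n-1,p-1)$. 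This can be fixed by a Quillen-type closure operator (delete the preimage of $\ell$), but it is not a ``further nerve-lemma argument,'' and the point is not addressed. (iv) The discrete Morse alternative is stated only as an intention. In short, the reductions are fine but the core connectivity estimate for $\Delta(P)$ --- which is the actual content of the lemma --- is not proved. To complete your approach you would need to carry out one of the two gluing arguments in detail; alternatively, you could simply note the isomorphism $L_i\cong C_{n,p}$ and invoke \cite[Lemma~2.1]{DHL}, as the paper does.
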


\begin{proof}
Let $C_{n,p}$ be the simplicial complex whose vertices are the ordered partitions of $[n]$ into $p$ non-empty parts $(A_1,\dots,A_p)$ with faces
\[ \{(A_{j,1}, A_{j,2}, \dots,A_{j,p})\}_{j=1}^k \]
where $\bigcap_{j=1}^k A_{j,\ell} \neq \emptyset$ for all $1 \leq \ell \leq p$. It is straightforward to show that $C_{n,p}$ and $L_i$ are isomorphic, and it was shown in \cite[Lemma 2.1]{DHL} that $C_{n,p}$ is 
$(n-p-1)$-connected. 
\end{proof}

Recall that the vertices of $N$ are the facets of $K$, and it follows from the definition that a facet of $K$ is a vertex in at most one of the complexes $M, L_2, \dots, L_m$ (but not every vertex of $N$ appears in one of these subcomplexes). Since the complexes $M, L_2, \dots, L_m$ are vertex disjoint we may take their join and define
\[T = M * L_2 * \cdots * L_m.\]

\begin{lemma} \label{lem:T}
  The complex  $T$ is a $\mathbb{Z}_p$-invariant subcomplex of $N$. 
\end{lemma}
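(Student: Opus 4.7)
The plan is to verify two properties: that $T$ is a genuine subcomplex of $N$, and that the $\mathbb{Z}_p$-action on $N$ restricts to one on $T$. Because each of the subcomplexes $M, L_2, \ldots, L_m$ was already noted to be $\mathbb{Z}_p$-invariant and they sit on pairwise disjoint vertex subsets of $V(N)$, the invariance of $T$ will be essentially immediate once the subcomplex claim is in hand; so the substantive content lies in the subcomplex claim.

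For the subcomplex claim, I would take an arbitrary face $\sigma \in T$ and use the disjointness of vertex sets to decompose it uniquely as $\sigma = \sigma_M \cup \sigma_{L_2} \cup \cdots \cup \sigma_{L_m}$ with $\sigma_M \in M$ and $\sigma_{L_i} \in L_i$ (some pieces possibly empty). The task is then to show that the facets of $K$ appearing in $\sigma$, viewed as subsets of $V(K)$, have non-empty intersection, so that $\sigma$ is a face of $N$. Writing each facet in $\sigma_M$ as $\face{1}{A^{(1,k)}_1} \uplus \cdots \uplus \face{1}{A^{(1,k)}_p}$ for $k=1,\dots,s_1$ and each facet in $\sigma_{L_i}$ as $\face{i}{A^{(i,k)}_1} \uplus \cdots \uplus \face{i}{A^{(i,k)}_p}$ for $k=1,\dots,s_i$, set $B^{(i)}_j \defeq \bigcap_{k} A^{(i,k)}_j$ (interpreted as $[n]$ when the corresponding piece is empty). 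Using $\vertex{i}{A}\cap \vertex{i}{A'} = \vertex{i}{A\cap A'}$, the intersection of all these facets restricted to the $j$-th part of $V(K)$ equals $\{(x_1,\ldots,x_m)\in[n]^m : x_i\in B^{(i)}_j \text{ for all } i\}$, which is non-empty iff every $B^{(i)}_j$ is non-empty.

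The definition of $L_i$ forces $B^{(i)}_j \neq \emptyset$ for every $j$ whenever $i \geq 2$, while the fact that $\sigma_M$ is itself a face of $N$ (since $M$ is an induced subcomplex of $N$) provides at least one index $j \in [p]$ for which $B^{(1)}_j \neq \emptyset$. For that $j$ all of $B^{(1)}_j,\ldots,B^{(m)}_j$ are non-empty, yielding the required common vertex in the $j$-th part. The $\mathbb{Z}_p$-invariance of $T$ then follows from the invariance of each $M$ and $L_i$ together with the fact that the $\mathbb{Z}_p$-action on $N$ permutes the parts of a disjoint union or partition without changing the index $i$, hence preserves each subcomplex individually. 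The only delicate point is the asymmetry between the defining conditions of $M$ (only one part of the disjoint union need be non-empty) and of $L_i$ (every part of the partition must be non-empty); recognizing that this asymmetry lines up precisely so that the intersection argument goes through is, I expect, the main conceptual step.
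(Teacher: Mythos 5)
Your proof is correct and takes essentially the same approach as the paper: decompose a face of $T$ into its pieces from $M$ and each $L_i$, use the nonemptiness condition built into the definition of $L_i$ together with the fact that $M$ is a subcomplex of $N$, and exhibit a common vertex in one part of $V(K)$. The paper picks explicit representatives $j_{i,\ell} \in B^{(i)}_\ell$ and writes down the common vertex directly, whereas you phrase the same argument via the intersections $B^{(i)}_j$, but the logic is identical.
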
    
\begin{proof}
We first show that $T\subset N$. The $\mathbb{Z}_p$-invariance follows from the fact that $M, L_2, \dots, L_p$ are all $\mathbb{Z}_p$-invariant subcomplexes of $N$.

Let $\tau=\tau_1 \cup \tau_2 \cup \cdots \cup \tau_m$ be a facet of $T$, where $\tau_1$ is a facet of $M$ and $\tau_i$ is a facet of $L_i$ for all $2\leq i \leq m$.
Since each $\tau_i$ is a collection of facets of $K$, it follows that $\tau$ is a face of $N$ if and only if there exists a vertex  
\[v \in \underset{\text{part 1}}{[n]^m} \uplus \underset{\text{part 2}}{[n]^m} \uplus \cdots \uplus \underset{\text{part }p}{[n]^m} = V(K)\] which belongs to every facet (of $K$) in 
$\tau_1 \cup \tau_2 \cup \cdots \cup \tau_m$.

To see why this is true we first observe that for every $2\leq i \leq p$ there is an ordered $p$-tuple $(j_{i,1}, j_{i,2}, \dots, j_{i,p})$ of $[n]$ such that every facet (of $K$) in $\tau_2 \cup \cdots \cup \tau_m$ contains 
\[ \renewcommand{\arraystretch}{1.25}
\begin{array}{cccccccl}
  [n] & \uplus & \{j_{2,1}\} & \uplus & \cdots & \uplus &  \{j_{m,1}\} & \text{  in part 1,} \\
  
  [n] & \uplus & \{j_{2,2}\} & \uplus & \cdots & \uplus &  \{j_{m,2}\} & \text{  in part 2,} \\
  
  &&&&\vdots &&& \\
  
  [n] & \uplus & \{j_{2,p}\} & \uplus & \cdots & \uplus &  \{j_{m,p}\} & \text{  in part $p$.} 
\end{array}
\] 
Next, observe that there exists integers $j \in [n]$ and $k\in [p]$ such that every facet (of $K$) in $\tau_1$ contains
\[
\begin{array}{cccccccl}
  \{j\} & \uplus & [n] & \uplus & \cdots & \uplus &  [n] & \text{  in part $k$.} \\
\end{array}
\]
It follows  that  the vertex $v = (j,j_{2,k}, \dots, j_{m,k})$ in the $k$th part of $V(K)$ belongs to every facet (of $K$) in $\tau_1 \cup \tau_2 \cup \cdots \cup \tau_m$.  
\end{proof}

\smallskip

In summary we have the following $\mathbb{Z}_p$-maps
\[T \underset{\text{Lemma } \ref{lem:T}}{\hookrightarrow} N \underset{\text{Lemma }\ref{lem:NerveMap}} \to K \underset{\text{Lemma } \ref{lem:test}}{\to} S^{(d+1)(p-1)-1}. \]
To complete the proof of Theorem \ref{t:tv-bound},  observe that for $n\geq \lceil  (\frac{d}{m} + 1)(p-1) + \frac{1}{m} \rceil$ we have
\[n-2 + (m-1)(n-p+1) \geq (d+1)(p-1)-1.\]
By the connectivity of the join (see e.g. \cite[Proposition 4.4.3]{matousek2003using}) together with Lemmas \ref{lem:M} and \ref{lem:Li} it follows that $T$ is $\big( (d+1)(p-1)-1 \big)$-connected, which gives us the desired contradiction to Dold's theorem. \qed

\section{Final remarks}

The proof of Theorem \ref{t:tv-bound} in the case when $p = q^r$ is a prime power is more or less identical to the prime case. The set up of the configuration space and the test map is the same, and even though the symmetric group $\symg{p}$ 
does not act freely, it does give a {\em fixed-point free} action on $K$ and on the target space $S^{(d+1)(p-1)-1}$. Therefore the action of the subgroup $\mathbb{Z}_q\times \mathbb{Z}_q \times \cdots \times \mathbb{Z}_q \subset \symg{p}$ is also fixed-point free, which  gives us an equivariant test-map that contradicts the following theorem of Volovikov \cite{volov96}.

\begin{theorem*}[Volovikov]
Let $G = \mathbb{Z}_q \times \mathbb{Z}_q \times \cdots \times \mathbb{Z}_q$ be the product of finitely many copies with $q$ prime. Let $X$ and $Y$ be fixed point free $G$-spaces, where $X$ is $n$-connected and $Y$ is finite-dimensional and homotopy equivalent to $S^n$. Then there is no $G$-equivariant map $X\to Y$.   
\end{theorem*}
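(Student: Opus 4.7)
The plan is to use the Fadell--Husseini ideal-valued index with $\mathbb{F}_q$-coefficients. For a $G$-space $Z$ write
\[\mathrm{Ind}_G(Z) \defeq \ker\!\Big( H^*(BG;\mathbb{F}_q)\longrightarrow H^*(EG\times_G Z;\mathbb{F}_q)\Big),\]
a graded ideal in $H^*(BG;\mathbb{F}_q)$. A $G$-equivariant map $X\to Y$ would induce a map of Borel constructions over $BG$, and hence the inclusion $\mathrm{Ind}_G(Y)\subseteq \mathrm{Ind}_G(X)$. The strategy is to exhibit a non-zero class in $\mathrm{Ind}_G(Y)$ of degree $n+1$ while showing that $\mathrm{Ind}_G(X)$ is trivial in all degrees $\le n+1$.

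First I would bound $\mathrm{Ind}_G(X)$ using the Serre spectral sequence of the Borel fibration $X\to EG\times_G X\to BG$. Since $X$ is $n$-connected, the $E_2$-page has $E_2^{p,q}=0$ for $1\le q\le n$. A differential entering the base row $E_r^{p,0}$ with $p\le n+1$ would originate from bidegree $(p-r,r-1)$; for $2\le r\le n+1$ the fibre degree $r-1$ lies in the vanishing band, and for $r\ge n+2$ the base degree $p-r$ is negative. All such differentials vanish, so the edge map $H^i(BG;\mathbb{F}_q)\hookrightarrow H^i(EG\times_G X;\mathbb{F}_q)$ is injective for every $i\le n+1$, and $\mathrm{Ind}_G(X)$ contains no non-zero element in those degrees.

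The Borel spectral sequence for $Y\to EG\times_G Y\to BG$ has only the fibre rows $q=0$ and $q=n$, so the unique potentially non-zero differential is the transgression $\tau\colon H^n(Y;\mathbb{F}_q)\to H^{n+1}(BG;\mathbb{F}_q)$; its image is exactly the degree $n+1$ part of $\mathrm{Ind}_G(Y)$. The main obstacle is to show $\tau\neq 0$, and this is where the hypotheses that $G$ is elementary abelian, that $Y$ is finite-dimensional, and that $Y^G=\emptyset$ become essential. I would argue by contradiction using the Borel--Quillen localization theorem. If $\tau=0$ the spectral sequence collapses and gives $H^*(EG\times_G Y;\mathbb{F}_q)\cong H^*(BG;\mathbb{F}_q)\otimes H^*(S^n;\mathbb{F}_q)$ as $H^*(BG;\mathbb{F}_q)$-modules. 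Localizing at the multiplicative set $S$ generated by the non-zero elements of $H^2(BG;\mathbb{F}_q)$ (or $H^1$ when $q=2$) then produces a non-zero module. On the other hand, Quillen's localization identifies this localized module with $S^{-1}H^*_G(Y^G;\mathbb{F}_q)=0$, a contradiction. Hence $\tau\neq 0$, producing the required class in $\mathrm{Ind}_G(Y)\cap H^{n+1}(BG;\mathbb{F}_q)$ and contradicting the containment $\mathrm{Ind}_G(Y)\subseteq\mathrm{Ind}_G(X)$ forced by the hypothetical equivariant map.
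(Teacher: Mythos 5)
The paper does not prove this statement; it is Volovikov's theorem, stated and cited from \cite{volov96} and used as a black box, so there is no in-paper proof to compare against. Your argument is a correct and essentially standard proof via the Fadell--Husseini index and Borel--Quillen localization, and it is in the same spirit as the arguments Volovikov uses (which are based on an ideal-valued index for equivariant cohomology and localization). Both spectral-sequence computations are right: $n$-connectivity of $X$ forces the edge map $H^i(BG;\mathbb{F}_q)\to H^i_G(X;\mathbb{F}_q)$ to be injective for $i\le n+1$, and for $Y\simeq S^n$ the only possibly non-trivial differential is the transgression $\tau$, whose image is exactly $\mathrm{Ind}_G(Y)$ in degree $n+1$. (Here you are implicitly using that the local system $H^n(Y;\mathbb{F}_q)$ on $BG$ is trivial; this holds because $G$ is a $q$-group and $\mathrm{Aut}(\mathbb{F}_q)=\mathbb{F}_q^\times$ has order prime to $q$, and is worth a sentence.) One small imprecision: for $q$ odd and $G=(\mathbb{Z}/q)^k$ with $k\ge 2$, the group $H^2(BG;\mathbb{F}_q)$ contains nilpotent products $x_ix_j$ from the exterior part, so you should not localize at all nonzero elements of $H^2$; the correct multiplicative set is generated by the Bocksteins $\beta\lambda$ of nonzero classes $\lambda\in H^1(BG;\mathbb{F}_q)$, which land in the polynomial subring and are non-zerodivisors. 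With that adjustment, the freeness of $H^*_G(Y)$ over $H^*(BG)$ (from the collapsed, two-row spectral sequence) together with $Y^G=\emptyset$, finite-dimensionality of $Y$, and Quillen localization give the desired contradiction exactly as you describe.
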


\smallskip

Regarding Corollary \ref{cor:ch}, we do not have any reason to believe that the constant $\frac{1}{2d+1}$ is optimal, but it seems that our proof method can not lead to further improvements. For $d\geq 2$ we do not even have an example that shows that the constant is strictly less than 1. (For $d=1$ it is easily seen that the constant  equals 1.)  
We leave it as an open problem to determine the optimal constant.

\end{document}